\theoremstyle{plain}
\newtheorem{theorem}                {Theorem}      [section]
\newtheorem*{theorem*}                {Theorem \ref{thm:appl}}
\newtheorem{corollary}    [theorem]  {Corollary}
\newtheorem{lemma}        [theorem]  {Lemma}
\theoremstyle{definition}
\numberwithin{equation}{section}
\begin{document}

\title
[Index bounds for closed minimal surfaces in 3-manifolds]
{Index bounds for closed minimal surfaces in 3-manifolds with the Killing property}

\author[Cavalcante]{Marcos Petr\'ucio Cavalcante}     
\address{Institute of Mathematics, Federal University of Alagoas, CEP 57072-970,
Macei\'o,  Alagoas,  Brazil}
\email{marcos@pos.mat.ufal.br}

\author[Oliveira]{Darlan F. de Oliveira}
\address{Universidade Estadual de Feira de Santana, Feira de Santana-BA, Brazil}
\email{darlanfdeoliveira@gmail.com}

\author[Silva]{Robson dos S. Silva}

\dedicatory{Dedicated to Professor Renato Tribuzy\\ on the occasion of his 75th birthday}

\subjclass[2010]{Primary 53A10, 49Q05, 35P15.}
\date{\today}
\keywords{Minimal surfaces, Morse index, harmonic forms.}

\begin{abstract}   Let $\Sigma$ be a closed minimal surface
immersed in a Riemannian 3-manifold carrying an orthonormal Killing frame. 
This class of ambient spaces includes Lie groups with a bi-invariant metric. 
In this paper, we prove that the sum of the Morse index and the nullity of $\Sigma$  is bounded 
from below by a constant times its genus.
\end{abstract}

\maketitle

\section{Introduction} 

Minimal hypersurfaces are critical points for the area functional, while constant mean curvature (CMC)
hypersurfaces are critical points for the area functional for volume preserving variations.
If  such a hypersurface minimizes area up to the second order, then we say that it is \emph{stable}.

More generally, we may define the \emph{index} of a minimal or a CMC hypersurface as the dimension
of the maximal subspace in the respective space of variations, where the second variation 
of the area functional is negative definite. It is well known that the second variation of the area functional
is given by a quadratic form associated to the Jacobi operator 
$J$ 
(see Section \ref{pre} for details and definitions)
and the index of $\Sigma$ equals the number of negative eigenvalues of $J$, counting with
multiplicities.
Geometrically, the index indicates the number of distinct variations which decrease area.
In particular, a hypersurface is stable if and only if the index is zero. 
In this paper, let us assume that  all manifolds are  oriented.

It is well known that there exists a strong connection between  topology and  index of minimal and CMC hypersurfaces.
For instance, in \cite{SY}, Schoen and Yau proved that if $\Sigma^2$ is a 
closed  stable minimal surface embedded 
in a 3-manifold $(M^3,g)$ with nonnegative scalar curvature $R_g$, then either 
$\Sigma$ is a sphere or $\Sigma$ is a torus. 
In the Euclidean 3-space, if $\Sigma\subset \mathbb R^3$ is a complete minimal  stable surface,
 then $\Sigma$ is a flat plane. This result is a nice generalization of the
classical Bernstein  theorem and was proved independently by do Carmo and Peng \cite{dCP}, 
Fischer-Colbrie and Schoen \cite{FCS}, and Pogorelov \cite{pogo}. 
Very recently, Chodosh and Li, proved in \cite{CLi} this  is still true for complete stable 
minimal hypersurface in $\mathbb R^4$.
More generally, $\Sigma\subset \mathbb R^3$ 
has finite index if, and only if, $\Sigma$ is conformally equivalent  to a compact Riemann surface
with finitely many points removed, see \cite{FC} and \cite{Gu}. 
In fact, denoting by $Ind(\Sigma)$ the index of $\Sigma$, we have that  
\[
Ind(\Sigma)\geq \frac {1}{3}(2g+4k-5),
\]
where $g$ and $k$ denote the genus and the number of ends of $\Sigma$, respectively. 
This nice estimate was proved by Chodosh and Maximo \cite{CM, CM2}, and  other important previous
estimates were obtained by Ros in \cite{R06}, and by Grigor'yan, Netrusov and Yau  in \cite{GNY}.

For closed minimal hypersurfaces in the unit sphere $\mathbb S^{n+1}\subset \mathbb R^{n+2}$, 
Savo used  harmonic vector fields to prove in \cite{Savo} that the index is bounded from
below by a linear function of the first Betti number. This technique was  generalized by 
Ambrozio, Carlotto and Sharp in \cite{ACS}, for a class of ambient spaces
that admit a special embedding  in the Euclidean space. In both works, the authors make use of the
parallel canonical orthonormal frame in an appropriate  $\mathbb R^d$.
Related  results  were obtained by Mendes Radeschi \cite{MR2020},
Gorodski, Mendes and Radeschi \cite{GMR}, and
Chao Li \cite{chao}.
These results  strongly support Schoen and Marques-Neves conjecture \cite{M14, N14}, which claims that if the ambient
space $\ M^{n+1}$ is complete and has positive Ricci curvature, 
then there exists a positive constant $C$, depending only
on $M$, such that for any closed minimal hypersurface $\Sigma$ in $M$ it holds that
\begin{equation}\label{conj}
Ind(\Sigma)\geq C(b_1(\Sigma)+\ldots b_n(\Sigma)),
\end{equation}
where $b_i(\Sigma)$ denotes the $i$-th Betti number of $\Sigma$. This conjecture is still
opened, but an important contribution was recently made  by Song \cite{Song}.


For CMC hypersurfaces, a classical result due to 
Barbosa, do Carmo, and Eschenburg \cite{BdCE} , asserts that  geodesic spheres are
the only closed stable CMC hypersurfaces in simply connected space forms.
In particular, small geodesic spheres in $\mathbb T^3$ are examples of stable CMC surfaces.

The first result estimating the index of a CMC surface by 
the topology 
was obtained by  the first and the second authors  in \cite{CO}. 
It was proved that 
 the index of a closed CMC surface immersed
in $\mathbb R^3$ or in $\mathbb S^3$ is bounded from below by an explicit constant 
times the genus. This result was recently generalized in
\cite{AH} for the same class of ambient spaces considered in \cite{ACS}.

In this present paper, 
we consider closed minimal 
surfaces immersed in
Riemannian $3$-manifolds with the \emph{Killing property}, that is,
supporting a global orthonormal frame of Killing vector fields.  This class of ambient space
was studied by D'Atri and Nickerson in \cite{DN} and  Tanno in \cite{T} and
contains all  Lie groups endowed with a bi-invariant metric. See
Section \ref{killing} for more details.

We also recall that the \emph{nullity} $Null(\Sigma)$ of a minimal hypersurface  
is the dimension of the space  of $L^2$ solutions of the Jacobi operator. In other words $Null(\Sigma)=\dim\{u\in W^{1,2}: Ju=0\}$.
In \cite{chao}, Chao Li proved that the nullity is also related with the topology of the hypersurface.

In our first result we use a novel approach to prove that the sum of the index and the nullity of
minimal surfaces immersed in this class of spaces is
bounded from by a multiple of its genus.
Since index and nullity are non negative integers, we can use the ceiling function
$
\lceil x\rceil =\min \,\{n\in \mathbb {Z} \mid n\geq x\}$ in our estimates.

\begin{theorem}\label{t1}
If $\Sigma^2$ is a closed minimal surface immersed in a Riemannian $3$-manifold $M^3$ with
the Killing property, then
\[
Ind(\Sigma)+Null(\Sigma)\geq
\left\lceil\frac{g(\Sigma)}{3}\right\rceil.
\]
Moreover, if $M$ has positive Ricci curvature, then
\[
Ind(\Sigma)\geq \left\lceil\frac{g(\Sigma)}{3}\right\rceil.
\]
\end{theorem}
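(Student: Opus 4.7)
The proof will follow the circle of ideas developed by Savo \cite{Savo} and Ambrozio--Carlotto--Sharp \cite{ACS}, in which harmonic 1-forms on $\Sigma$ are paired with a distinguished frame on the ambient space to produce test functions for the Jacobi operator. In our setting, the role of the parallel coordinate frame on $\mathbb{R}^d$ is played by the global orthonormal Killing frame $\{K_1,K_2,K_3\}$ on $M$. By Hodge theory, the space $\mathcal{H}^1(\Sigma)$ of harmonic 1-forms on the closed oriented surface $\Sigma$ has real dimension $2g(\Sigma)$. For each $\omega\in\mathcal{H}^1(\Sigma)$ and each $i\in\{1,2,3\}$ I introduce the scalar test function
\[
u_{\omega,i}\;:=\;\omega\bigl(K_i^{T}\bigr),
\]
where $K_i^T=K_i-\langle K_i,N\rangle N$ is the tangential component of $K_i$ along $\Sigma$. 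This assembles into a linear map $\Phi\colon \mathcal{H}^1(\Sigma)\to L^2(\Sigma)^{\oplus 3}$.

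A first observation is that $\Phi$ is injective. At any $p\in\Sigma$, the three vectors $K_1^T,K_2^T,K_3^T$ must span $T_p\Sigma$: otherwise they would lie together with $N_p$ in a proper subspace of the 3-dimensional $T_pM$, contradicting that $\{K_1,K_2,K_3\}$ is an orthonormal basis of $T_pM$. Hence $\omega(K_i^T)=0$ for all $i$ forces $\omega=0$ pointwise.

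The heart of the argument is the pointwise computation of $Ju_{\omega,i}$ and the summation over $i$. The Killing equation applied to tangential pairs yields $\mathcal{L}_{K_i^T}g^\Sigma=2\langle K_i,N\rangle A$, where $A$ is the shape operator, while the normal-tangent pairing produces a controlled expression for $\nabla^{\Sigma}\langle K_i,N\rangle$. Combining this with the Bochner identity $\nabla^{*}\nabla\omega=-\mathrm{Ric}^{\Sigma}\omega$ for a harmonic 1-form on a surface, the Gauss and Codazzi equations of the immersion, and the crucial aggregation identity $\sum_{i=1}^{3}K_i\otimes K_i=\mathrm{id}_{TM}$ coming from orthonormality of the frame, I expect to arrive at the pointwise or integrated bound
\[
\sum_{i=1}^{3} Q(u_{\omega,i})\;\le\;0,
\]
where $Q(u)=\int_{\Sigma}\bigl(|\nabla u|^{2}-(|A|^{2}+\mathrm{Ric}_M(N,N))u^{2}\bigr)\,dA$ is the index form. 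I expect this step to be the main obstacle: one must carefully track several curvature terms arising when swapping intrinsic and ambient covariant derivatives, and ensure that the aggregation over $i$ combined with minimality ($\mathrm{tr}_{\Sigma}A=0$) cancels the leftover terms.

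Granted that inequality, the index estimate follows by standard linear algebra. Let $W\subset L^2(\Sigma)$ be the $(\mathrm{Ind}(\Sigma)+\mathrm{Null}(\Sigma))$-dimensional subspace spanned by eigenfunctions of $J$ with non-positive eigenvalues, and let $V=W^\perp$, so that $Q$ is positive definite on $V$. If $\omega\in\mathcal{H}^1(\Sigma)$ is such that each $u_{\omega,i}$ lies in $V$, then $\sum_i Q(u_{\omega,i})\le 0$ forces every $u_{\omega,i}=0$, hence $\omega=0$ by injectivity of $\Phi$. Consequently $\Phi(\mathcal{H}^1(\Sigma))\subset W^{\oplus 3}$, which yields $2g(\Sigma)\le 3(\mathrm{Ind}(\Sigma)+\mathrm{Null}(\Sigma))$, giving the ceiling bound in the theorem.

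For the refinement under $\mathrm{Ric}_M>0$, I would revisit the equality case of the sum inequality: if some $u_{\omega,i}$ lay in the null space of $J$, then the positivity of $\mathrm{Ric}_M(N,N)$ would impose rigidity (forcing the normal components $\langle K_i,N\rangle$ or the form $\omega$ to vanish), allowing one to conclude that $\Phi(\mathcal{H}^1(\Sigma))$ is contained in the strictly negative eigenspace of $J$. This upgrades the estimate to $\mathrm{Ind}(\Sigma)\ge \lceil g(\Sigma)/3\rceil$.
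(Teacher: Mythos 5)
Your overall strategy --- pairing harmonic $1$-forms with the tangential projections $E_i=K_i^T$ of the orthonormal Killing frame, noting that $E_1,E_2,E_3$ span each tangent plane so the assignment $\omega\mapsto(\omega(E_i))_i$ is injective, and running a dimension count against the nonpositive eigenspace of $J$ --- is the same as the paper's. But there is a genuine gap at the step you yourself flag as the main obstacle: the inequality $\sum_{i=1}^{3}Q(u_{\omega,i})\le 0$ is asserted (``I expect to arrive at''), not proved, and in the form you state it is strictly stronger than what the computation actually delivers. The paper does not work with the three functions $w_i=\omega(E_i)$ alone; it pairs them with the three Hodge-star companions $\bar w_i=(\star\omega)(E_i)$ and proves only the combined identity
\[
\sum_{i=1}^{3}\Bigl(\int_\Sigma w_iJw_i\,dv+\int_\Sigma\bar w_iJ\bar w_i\,dv\Bigr)=-2\int_\Sigma \mathrm{Ric}(N,N)\,|\xi|^2\,dv
\]
in the minimal case, which is nonpositive because the Killing property forces $\mathrm{Ric}\ge 0$ (D'Atri--Nickerson, Tanno) --- a fact your sketch should also invoke. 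The companions are not optional: the integration by parts closes up only through the pointwise identity $w_i\,\xi+\bar w_i\,(\star\xi)=|\xi|^2E_i$, which converts the leftover terms $\int_\Sigma w_i\langle\nabla g_i,A\xi\rangle\,dv$ into $\int_\Sigma\langle E_i,A\nabla g_i\rangle|\xi|^2\,dv$, a quantity that can then be rewritten as $-\int_\Sigma g_i^2|\xi|^2\|A\|^2\,dv$ via Codazzi and the Killing equation. With only the three functions $u_{\omega,i}$ these terms do not cancel. Indeed, your claimed per-$\omega$ inequality would yield $2g(\Sigma)\le 3(Ind(\Sigma)+Null(\Sigma))$, i.e.\ a bound of $2g/3$ rather than the paper's $g/3$; the fact that your route would prove something strictly stronger than the theorem is a warning sign that the key estimate has been assumed rather than derived. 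What is actually available is the inequality for the pair $\{\omega,\star\omega\}$ jointly, hence six orthogonality conditions per eigenfunction and the factor $6$ in the paper's count $2g(\Sigma)>6(k-1)$, which is exactly where $\lceil g(\Sigma)/3\rceil$ comes from.

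Two smaller points. First, your conclusion ``$\Phi(\mathcal H^1(\Sigma))\subset W^{\oplus3}$'' is not what the argument gives; the correct statement is that the composition of $\Phi$ with the orthogonal projection onto $W^{\oplus3}$ is injective, which suffices for $\dim\mathcal H^1(\Sigma)\le 3\dim W$. Second, the positive-Ricci refinement needs no rigidity or equality-case analysis: once $\mathrm{Ric}>0$, the right-hand side of the displayed identity is strictly negative for $\xi\ne0$, so among the test functions placed in $\mathcal S_{k-1}$ at least one has strictly negative Rayleigh quotient, giving $\lambda_k<0$ and hence $Ind(\Sigma)\ge k$ directly.
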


As a consequence of the proof we have
\begin{corollary}\label{c1}
If $\Sigma^2$ is a closed CMC surface immersed in a Riemannian $3$-manifold $M^3$ with
the Killing property, then
\[
Ind(\Sigma)\geq \left\lceil\frac{g(\Sigma)}{3}-1\right\rceil.
\]
\end{corollary}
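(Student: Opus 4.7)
The plan is to inherit, almost verbatim, the subspace of test functions constructed in the proof of Theorem \ref{t1} and then cut it by the CMC volume-preservation constraint. The Jacobi operator $J = \Delta_\Sigma + |A|^2 + \mathrm{Ric}(N,N)$ and its quadratic form take the same shape for minimal and for CMC hypersurfaces; moreover, the normal components of the ambient Killing fields $K_1, K_2, K_3$ still supply Jacobi fields on a CMC surface, because the isometries of $M$ preserve the CMC condition. Consequently, the argument used to prove Theorem \ref{t1} -- test functions built from harmonic $1$-forms on $\Sigma$ paired with the three Killing fields -- produces, in the CMC case just as in the minimal case, a linear subspace $V \subset C^{\infty}(\Sigma)$ of dimension at least $\lceil g(\Sigma)/3 \rceil$ on which the Jacobi quadratic form $Q$ is negative semi-definite.

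The only distinction between the two problems is that, for a CMC surface, admissible normal variations must preserve enclosed volume to first order; equivalently, the test function $u$ must satisfy the single linear constraint $L(u) := \int_{\Sigma} u \, dA_\Sigma = 0$. Intersecting $V$ with $\ker L$ costs at most one dimension, so
\[
\dim(V \cap \ker L) \;\geq\; \lceil g/3 \rceil - 1 \;=\; \lceil g/3 - 1 \rceil,
\]
using the elementary identity $\lceil x - 1\rceil = \lceil x\rceil - 1$. On $V \cap \ker L$ the form $Q$ is still negative semi-definite and every element is an admissible CMC variation, so the standard variational count yields $\mathrm{Ind}(\Sigma) \geq \lceil g/3 - 1 \rceil$, as required.

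The main obstacle, and the only point that needs to be checked carefully, is that the construction and estimate from the proof of Theorem \ref{t1} are insensitive to the value of the constant mean curvature $H$. The underlying ingredients -- the Codazzi equation, the Killing identities for the $K_i$, and a Weitzenb\"ock-type identity for harmonic $1$-forms on $\Sigma$ -- enter the evaluation of $Q$ on the chosen test functions in a way that is blind to $H$: any $H$-dependent terms either cancel or are harmlessly absorbed into the quadratic form as written. Granting this, the CMC corollary drops out of the minimal case together with the one-dimensional cost of the volume constraint.
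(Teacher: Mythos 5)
Your overall strategy---inherit the negative subspace from the proof of Theorem \ref{t1} and sacrifice one dimension to the volume constraint $\int_\Sigma u\,dv=0$---is a legitimate route; it is essentially the Barbosa--B\'erard reduction that the paper itself cites as an alternative. The paper's own proof does something parallel but organized differently: instead of intersecting a pre-built subspace with $\ker L$, it imposes the six additional equations $\int_\Sigma w_i\,dv=\int_\Sigma\bar w_i\,dv=0$ directly in the homogeneous linear system on $\mathcal H^1(\Sigma)$, so a nontrivial harmonic $\xi$ exists whenever $2g(\Sigma)>6k$ rather than $2g(\Sigma)>6(k-1)$; either bookkeeping loses exactly one from the final count. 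The gap in your argument lies elsewhere. Throughout, you only ever claim that $Q$ is negative \emph{semi-}definite on your subspaces, and the dimension of a subspace on which $Q\le 0$ only bounds $Ind(\Sigma)+Nul(\Sigma)$ from below, not $Ind(\Sigma)$. Indeed, Theorem \ref{t1} itself only concludes $Ind+Nul\ge\lceil g/3\rceil$ in general and needs positive Ricci curvature to drop the nullity; your proposal as written therefore delivers $Ind(\Sigma)+Nul(\Sigma)\ge\lceil g(\Sigma)/3-1\rceil$, which is weaker than the Corollary.

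The point you dismiss---that the estimate is ``blind to $H$'' and that ``$H$-dependent terms either cancel or are harmlessly absorbed''---is exactly where the missing ingredient sits, and it is false as stated. Lemma \ref{l2} is proved for CMC surfaces and reads
\[
\sum_i\Bigl(\int_\Sigma w_iJw_i\,dv+\int_\Sigma\bar w_iJ\bar w_i\,dv\Bigr)
=-2\int_\Sigma \mathrm{Ric}(N,N)\,|\xi|^2\,dv-4H^2\int_\Sigma|\xi|^2\,dv.
\]
The $H$-dependent term does not cancel: it contributes the strictly negative quantity $-4H^2\int_\Sigma|\xi|^2\,dv$, since the paper's convention is that CMC means $H\neq 0$. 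This forces at least one nonzero admissible test function with strictly negative Rayleigh quotient, hence $\lambda_k<0$ rather than $\lambda_k\le 0$, and that strict inequality is precisely what upgrades the conclusion from ``index plus nullity'' to ``index'' with no curvature hypothesis. To repair your proof you must invoke this term to obtain a subspace on which $Q$ is negative \emph{definite} before cutting by $\ker L$.
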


We point out that our bounds
can be found already in the literature,
sometimes better ones, 
when applied  to  the list of known examples of 
$3$-manifolds with the Killing property.

\section{Preliminaries}\label{pre} 
In this section we present some preliminaries definitions and notations we will use to prove  our results. 
\subsection{The Killing property}\label{killing}
We say that a  Riemannian manifold $M$ has the \emph{Killing property} if, in some neighborhood
of each point of $M$, there exists an orthonormal Killing frame, that is, an orthonormal frame $\{X_1, \ldots, X_n\}$ 
such that each  $X_i$ is a Killing vector field, in the sense that it generates infinitesimal isometries. 

This concept was introduced in \cite{DN} by D'Atry and Nickerson in 1968, and among other things they proved
that Riemannian manifolds with Killing property have nonnegative sectional curvature.  
Next, Tanno in \cite{T} proved that in the 3-dimensional case, Killing property implies that $M$ has
constant (nonnegative) sectional curvature.

In this paper we will consider Riemannian manifolds endowed with a \emph{global} orthonormal 
Killing frame  as ambient spaces for minimal hypersurfaces.

The most simple example is the Euclidean space. More generally, 
we can  check that Lie groups endowed with  a bi-invariant metric have a global 
orthonormal frame of Killing vector fields. In fact, fixed an orthonormal basis in the Lie algebra, the 
corresponding left (or equivalently right) invariant frame is an orthonormal Killing frame. 
In the 3-dimensional case such spaces were classified in \cite{MR2002821}. They are
$\mathbb R^3$, $\mathbb S^3$, $\mathbb T^3=\mathbb S^1\times \mathbb S^1\times \mathbb S^1$,
$\mathbb S^1\times \mathbb S^1\times \mathbb R$,
$\mathbb S^1\times  \mathbb R^2$, and the real projective space $\mathbb R\mathbb P^3$ (=$SO(3)$).

It is an interesting fact that, among the canonical spheres, only $\mathbb S^1$, $\mathbb S^3$ 
and $\mathbb S^7$ have a global orthonormal Killing frame (see \cite{DN}). 
Finally, it is easy to see that the Riemannian product of  manifolds with the Killing property
also have the Killing property.

\subsection{Stability of closed hypersurfaces}\label{stab}

Let $M^{n+1}$ be a Riemannian manifold, and let $\textrm{Ric}$ denote its Ricci tensor.
If $\Sigma^n$ is a closed, two-sided, minimal hypersurface in $M$,
we denote by $N$ a globally defined unit normal vector along $\Sigma$, and by $A$ its second fundamental form. 
For simplicity of notation, we also use $A$ to denote the shape operator of 
$\Sigma$.
 In this case, the second variation of the area
functional of $\Sigma$ is given by the  quadratic form
\[
Q(u,u)= -\int_\Sigma uJu\,dv, 
\]
where $J = \Delta + \|A\|^2+\textrm{Ric}(N)$ is the Jacobi operator acting in the Sobolev space
$W^{1,2}(\Sigma)$.  We can easily  see that $J$ is a self-adjoint elliptic operator and its spectrum
is given by a sequence of eigenvalues
 diverging to $+\infty$, say 
\[
\lambda_1<\lambda_2\leq \ldots \lambda_k \leq \ldots
\]

In particular, if $\{\phi_1, \phi_2,\ldots, \phi_k, \ldots\}$ is an orthonormal basis of eigenfunctions of $J$,
and if we denote by $\mathcal S_k=\langle \phi_1, \ldots, \phi_k\rangle^\perp$ the subspace of
$W^{1,2}(\Sigma)$ orthogonal to  the first $k$ eigenfunctions of $J$, then the min-max 
characterization of eigenvalues  implies that
\[
\lambda_k  = \inf_{u\in\mathcal S_{k-1}}\frac{\int_\Sigma uJu\,dv}{\int_\Sigma u^2\,dv}.
\]

The  \emph{index} $Ind(\Sigma)$ of $\Sigma$  is defined as the number of negative eigenvalues 
of $J$ in $W^{1,2}(\Sigma)$. Equivalently, the index of $\Sigma$ is
the maximal dimension of a subspace of $W^{1,2}(\Sigma)$
where $Q$ is negative defined.
The \emph{nullity}  of $\Sigma$, denoted by $Nul(\Sigma)$, is the  dimension of the subspace of eigenfunctions 
corresponding the eigenvalue $\lambda = 0$.

If $\Sigma$ is a CMC hypersurface, that is, $H\neq 0$ and constant, the index and the nullity are defined in the same way, but considering the space of test functions as the  subspace of $W^{1,2}(\Sigma)$
orthogonal to constants. In fact, the test functions may
have zero mean integral in order to generate volume
preserving variations.

%

%
%
%

\subsection{Test functions and harmonic vector fields}\label{test}

We denote by $\Omega^p(\Sigma)$ the space of
$p$-forms on $\Sigma$ and by $\mathcal H^p(\Sigma)$ the subspace of harmonic $p$-forms,
that is, 
\[
\mathcal H^p(\Sigma)=\{\omega \in \Omega^p(\Sigma): \Delta \omega = d\delta \omega+ \delta d \omega=0\}.
\]

We recall from Hodge-de Rham theorem that $\mathcal H^p(\Sigma)$ is 
isomorphic to the $p$-th de Rham cohomology
group $H^p(\Sigma)$, and so $\dim \mathcal H^p(\Sigma)=b_p(\Sigma)$ is 
the $p$-th Betti number of $\Sigma$. In particular, if $\Sigma$ is 2-dimensional, then 
$b_1(\Sigma) =2g(\Sigma)$, where $g(\Sigma)$ denotes
the topological genus of $\Sigma$.

The Riemannian metric of $\Sigma$ induces the so called \emph{musical
isomorphism} between $1$-forms $\omega\in \Omega^1(\Sigma)$ 
and vector fields $\xi\in T\Sigma$ given by the following identity
\[
\omega(X) = \langle \xi, X\rangle,
\]
for all $X\in T\Sigma$. In this case, we say that $\omega$ and $\xi$ are dual, and we write 
$\xi = \omega^\sharp$ or $\omega = \xi^\flat$. 
For instance, using this isomorphism we have that 
\begin{equation}\label{div}
\delta \omega = \textrm{div}\, \xi = - tr \nabla \xi.
\end{equation}
Moreover, we define the \emph{Hodge Laplacian} of $\xi$ as the vector field dual to
the Hodge Laplacian of $\xi^\flat$, that is, $\Delta \xi = (\Delta \xi^\flat)^\sharp.$
In particular, we say that
$\xi$ is a \emph{harmonic vector field}  if and only if $\omega$ is a harmonic form.

On the other hand, for vector fields on $\Sigma$ we also have the so called \emph{rough Laplacian}, which is defined as
\[
\nabla ^\star \nabla \xi = -\sum_{k=1}^n (\nabla_{e_k}\nabla_{e_k}\xi-\nabla_{\nabla_{e_k}e_k}\xi),
\]
where $\{e_1,\ldots, e_n\}$ is a local orthonormal frame on $\Sigma$. These two Laplacians are related
by the Bochner-Weitzenb\"ock formula
\begin{eqnarray}\label{bochner}
\Delta \xi = \nabla ^\star \nabla \xi + \textrm{Ric}_\Sigma(\xi).
\end{eqnarray}

Now, let us assume that $M^{n+1}$ has the Killing property, and let denote by 
$\{X_1,\ldots, X_{n+1}\}$  an orthonormal
frame of Killing vectors. If $\Sigma\subset M$ is a closed minimal hypersurface, we denote by 
$E_i$ the orthogonal projection of $X_i$ onto the tangent space of $\Sigma$, that is,
\[
E_i = X_i-g_iN,
\]
where $g_i=\langle X_i, N\rangle $. Following ideas in \cite{Savo} and \cite{ACS}, given a harmonic
vector field $\xi = \omega^\sharp$ on $\Sigma$  we will consider
\[
w_i=\langle \xi, E_i\rangle, 
\]
as test functions to estimate index and nullity of $\Sigma$. 

\section{Minimal surfaces in 3-manifolds with Killing property}

In this section, $M^3$  denotes a 3-dimensional Riemannian manifold 
endowed with a global orthonormal
frame of Killing vectors, $\{X_1, X_2,X_3\}$. 
In the next lemma we use the notations presented in
Section \ref{pre}.

\begin{lemma}\label{l1}
Let $\Sigma$  be a closed oriented CMC surface immersed in $M^3$. 
Then we have the following assertions:
\begin{enumerate}
\item $\langle \nabla_XE_i, Y\rangle +\langle \nabla_YE_i, X\rangle=2g_i\langle AX, Y\rangle$,
for all $X,Y\in T\Sigma$.
\item $\langle \nabla E_i, \nabla \xi\rangle =g_i\langle A, \nabla \xi\rangle $.
\item \emph{div}$E_i=-2Hg_i$.
\end{enumerate}
\end{lemma}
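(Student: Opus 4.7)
The plan is to deduce all three assertions from one short calculation: differentiate the relation $E_i = X_i - g_i N$ in the ambient manifold and then use the defining antisymmetry of the Killing field $X_i$, namely $\langle \overline{\nabla}_X X_i, Y\rangle + \langle \overline{\nabla}_Y X_i, X\rangle = 0$ for all tangent $X, Y$, where $\overline{\nabla}$ denotes the Levi-Civita connection of $M$.

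For item (1), I would expand
\[
\overline{\nabla}_X E_i = \overline{\nabla}_X X_i - X(g_i)\, N - g_i\, \overline{\nabla}_X N
\]
and take the $T\Sigma$-component. The Weingarten relation $\overline{\nabla}_X N = -AX$ turns the last term into $g_i AX$, giving $\langle \nabla_X E_i, Y\rangle = \langle \overline{\nabla}_X X_i, Y\rangle + g_i \langle AX, Y\rangle$. Adding this to its transpose in $(X,Y)$, and invoking the Killing equation together with the symmetry of the shape operator $A$, produces (1).

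For item (3), I would specialize (1) to $X = Y = e_k$ for a local orthonormal frame $\{e_1, e_2\}$ of $\Sigma$ and sum, which gives $\mathrm{div}\, E_i = g_i\, \mathrm{tr}(A)$; this matches the stated $-2 H g_i$ under the sign convention $H = -\tfrac{1}{2}\mathrm{tr}(A)$ compatible with $\overline{\nabla}_X N = -AX$.

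For item (2), I would split $\nabla E_i$ into its symmetric and antisymmetric parts; assertion (1) identifies the symmetric part with $g_i A$. On the other hand, since $\xi$ is a harmonic vector field, the dual form $\omega = \xi^\flat$ is in particular closed, and the identity $d\omega(X,Y) = \langle \nabla_X \xi, Y\rangle - \langle \nabla_Y \xi, X\rangle = 0$ tells us exactly that $\nabla \xi$ is symmetric as a $(0,2)$-tensor. Hence only the symmetric part of $\nabla E_i$ pairs nontrivially with $\nabla \xi$, and one reads off $\langle \nabla E_i, \nabla \xi\rangle = g_i \langle A, \nabla \xi\rangle$. The computations are essentially routine; the main obstacle I anticipate is simply the bookkeeping of sign conventions for $N$, $A$, and $H$, which must be fixed consistently at the outset so that the minus sign in (3) emerges as stated.
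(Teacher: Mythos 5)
Your proposal is correct and follows essentially the same route as the paper: assertion (1) is the tangential part of the Killing equation via the Weingarten formula, assertion (2) uses the splitting of $\nabla E_i$ into symmetric and antisymmetric parts together with the symmetry of $\nabla \xi$ coming from $d\omega=0$, and assertion (3) is the trace of (1). The only cosmetic difference is the sign bookkeeping in (3): the paper's declared convention is $\operatorname{div}\xi=-\operatorname{tr}\nabla\xi$ with $\operatorname{tr}(A)=2H$, so the minus sign comes from the divergence convention rather than from taking $H=-\tfrac{1}{2}\operatorname{tr}(A)$ as you suggest.
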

\begin{proof}
The first assertion follows taking the tangent part of the Killing equation for the vector fields $X_i$.

To prove the second assertion, we consider $\nabla E_i$ and $\nabla \xi$ as 2-forms using
the musical isomorphism.
So, $\nabla E_i(X,Y) = \langle \nabla_X E_i, Y\rangle$.  It allow us to consider the 
algebraic  dual of $\nabla E_i$, defined by the following equation 
\[
(\nabla E_i)^t (X,Y)= \langle X, \nabla_Y E_i\rangle.
\]
That is, $(\nabla E_i)^t(X,Y) = \nabla E_i(Y,X)$, and 
from Assertion 1, we have, $\nabla E_i + (\nabla E_i)^t= 2g_iA$. 

On the other hand,
since $\xi$ is harmonic, $\nabla \xi$ is  a symmetric tensor. Thus, if 
$\{e_1,e_2\}$ is a local orthonormal frame, geodesic at $p\in\Sigma$ we have
\begin{eqnarray*}
\langle(\nabla E_i)^{t},\nabla\xi\rangle &=&\sum_{k,j}(\nabla E_i)^{t}(e_k,e_j)\cdot\nabla\xi(e_k,e_j) \\
&=&\sum_{k,j}(\nabla E_i)(e_j,e_k)\cdot\nabla\xi(e_j,e_k) \\
&=&\sum_{k}\sum_{j}\langle\nabla_{e_k}E_i,e_j\rangle\langle\nabla_{e_k}\xi,e_j\rangle \\
&=&\sum\langle\nabla_{e_k}E_i,\nabla_{e_k}\xi\rangle \\
&=&\langle\nabla E_i,\nabla\xi\rangle.
\end{eqnarray*} 

So,
\begin{eqnarray*}
\langle\nabla E_i,\nabla\xi\rangle &=&\frac{1}{2}\langle\nabla E_i+(\nabla E_i)^{t},\nabla\xi\rangle+
\frac{1}{2}\langle\nabla E_i-(\nabla E_i)^{t},\nabla\xi\rangle \\
&=&\frac{1}{2}\langle\nabla E_i+(\nabla E_i)^{t},\nabla\xi\rangle \\
&=&\frac{1}{2}\langle 2g_iA,\nabla\xi\rangle \\
&=&g_i\langle A,\nabla\xi\rangle.
\end{eqnarray*}

Finally, denoting by $h$ the induced metric on $\Sigma$, we have
\begin{eqnarray*}
\text{div}E_i &=&-\text{tr}(\nabla E_i) \\
&=&-\langle \nabla E_i ,h\rangle \\
&=&-\frac{1}{2}\langle\nabla E_{i}+(\nabla E_{i})^{t},h\rangle \\
&=&-\frac{1}{2}\langle 2g_i A,h\rangle \\
&=&-g_i\langle A,h\rangle \\
&=&-g_i\text{tr}(A) \\
&=&-2Hg_i.
\end{eqnarray*}
This concludes the lemma.
\end{proof}

In the next lemma we use  auxiliary test functions associated to the Hodge star of $\omega$ in $\Sigma$. More precisely, we define
\[
\bar w_i = \star \omega(E_i) = \langle \star \xi, E_i\rangle.
\]

Moreover, we point out that due to our convention in (\ref{div}) the Green's formula  for 
closed manifolds reads as
\[
\int_\Sigma w\,\textrm{div} X\,dv = \int_\Sigma \langle \nabla w, X\rangle dv,			
\]
for $w\in C^1(\Sigma )$ and $X\in T\Sigma$.
\begin{lemma}\label{l2}
\[
\sum_i \big( \int_\Sigma w_iJw_i dv+ \int_\Sigma \bar w_iJ\bar w_i dv\big)= 
 -2\int_\Sigma \emph{Ric}(N,N)|\xi|^2 - 4H^2\int_\Sigma |\xi|^2dv.
\]
\end{lemma}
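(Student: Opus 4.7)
The plan is to derive a pointwise formula for $\sum_i w_i J w_i$ (and its $\bar w_i$ counterpart), reducing it to $|\xi|^2$ multiplied by an expression purely in ambient and intrinsic curvatures, and then integrate. Two key algebraic identities drive the calculation: $\sum_i E_i\otimes E_i = \mathrm{Id}_{T\Sigma}$, which follows by computing $\sum_i\langle E_i,V\rangle E_i$ for $V\in T\Sigma$ using orthonormality of $\{X_1,X_2,X_3\}$; and $\sum_i g_i E_i = 0$, which comes from $\sum_i g_i X_i = N$.

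I would apply the Leibniz rule $\Delta\langle\xi,E_i\rangle = \langle \Delta^R\xi,E_i\rangle + 2\langle\nabla\xi,\nabla E_i\rangle + \langle\xi,\Delta^R E_i\rangle$, where $\Delta^R$ denotes the rough Laplacian on vector fields. For harmonic $\xi$, (\ref{bochner}) reduces $\Delta^R\xi$ to a Ricci term, and Lemma \ref{l1}(2) replaces $\langle\nabla\xi,\nabla E_i\rangle$ by $g_i\langle A,\nabla\xi\rangle$. Multiplying by $w_i$ and summing over $i$: the first identity above collapses the Ricci piece into $\textrm{Ric}_\Sigma(\xi,\xi) = K_\Sigma|\xi|^2$, and the second annihilates the cross term, giving
\[ \sum_i w_i\Delta w_i = K_\Sigma|\xi|^2 + \sum_i \langle\xi,E_i\rangle\langle\xi,\Delta^R E_i\rangle. \]

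The identical computation applies to $\bar\xi = \star\xi$, which is harmonic because $\star$ commutes with the Hodge Laplacian on $\Sigma$. Summing both contributions, I would use $\xi\otimes\xi + \bar\xi\otimes\bar\xi = |\xi|^2\,\mathrm{Id}_{T\Sigma}$ (since $\star$ acts as a $\pi/2$-rotation on tangent vectors to the surface) to reduce the final bilinear term to $|\xi|^2\sum_i\langle E_i,\Delta^R E_i\rangle$. The Bochner identity $\tfrac12\Delta|E_i|^2 = \langle E_i,\Delta^R E_i\rangle + |\nabla E_i|^2$ together with the striking simplification $\sum_i|E_i|^2 = \sum_i(1-g_i^2) = 2$ (constant, so its Laplacian is zero) then forces $\sum_i\langle E_i,\Delta^R E_i\rangle = -\sum_i|\nabla E_i|^2$.

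The main obstacle is to evaluate $\sum_i|\nabla E_i|^2$ in closed form. Lemma \ref{l1}(1) shows the symmetric part of $\nabla E_i$ is $g_i A$, contributing $\|A\|^2$ after $\sum g_i^2 = 1$. For the antisymmetric part, the Killing equation yields $dE_i^\flat(X,Y) = 2\langle\nabla^M_X X_i,Y\rangle$; the fact from Section \ref{killing} that $M^3$ with the Killing property has constant sectional curvature $c$ (Tanno) means that in the Killing frame the connection coefficients are totally antisymmetric, so a short computation gives $\sum_i|\nabla E_i^{\mathrm{antisym}}|^2 = 2c$. Hence $\sum_i|\nabla E_i|^2 = \|A\|^2 + 2c$. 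Combining this with the potential contribution $2(\|A\|^2+\textrm{Ric}(N,N))|\xi|^2$ from the $w_i^2+\bar w_i^2$ sum and invoking the Gauss equation $2K_\Sigma = 2c + 4H^2 - \|A\|^2$ for a CMC surface together with $\textrm{Ric}(N,N) = 2c$, the $\|A\|^2$ and $K_\Sigma$ terms cancel entirely, leaving exactly $-2\textrm{Ric}(N,N)|\xi|^2 - 4H^2|\xi|^2$ pointwise; integration gives the claim.
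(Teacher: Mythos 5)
Your proof is correct, but it takes a genuinely different route from the paper's. The paper never computes $\Delta w_i$ pointwise: it writes $\nabla w_i=\delta(\xi^\flat\wedge E_i^\flat)-2Hg_i\xi+2g_iA\xi$, observes that the co-exact term pairs to zero against $\nabla w_i$ in $L^2$, and integrates by parts (using the Codazzi equation and $\mathrm{div}(A\xi)=0$ for harmonic $\xi$ and constant $H$) to reach the energy identity $\sum_i\int_\Sigma(|\nabla w_i|^2+|\nabla\bar w_i|^2)\,dv=2\int_\Sigma\|A\|^2|\xi|^2\,dv-4H^2\int_\Sigma|\xi|^2\,dv$; the Ricci term then enters only through the zeroth-order part of $J$ together with $\sum_i(w_i^2+\bar w_i^2)=2|\xi|^2$. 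In particular, the paper's argument uses only the Killing property via Lemma \ref{l1}. Your pointwise Leibniz--Bochner computation must instead evaluate $\sum_i\langle E_i,\nabla^\star\nabla E_i\rangle=\sum_i|\nabla E_i|^2$ and the closed-form $\sum_i|\nabla E_i|^2=\|A\|^2+2c$, which genuinely requires Tanno's constant-curvature theorem, the total antisymmetry of the connection coefficients $\langle\overline\nabla_{X_a}X_b,X_c\rangle$ (a consequence of metric compatibility plus the Killing equations), $\mathrm{Ric}(N,N)=2c$, and the Gauss equation $2K_\Sigma=2c+4H^2-\|A\|^2$ --- none of which appear in the paper's proof. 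I checked your one asserted step: writing $\overline\nabla_{X_a}X_b=\lambda\,\epsilon_{abc}X_c$, the antisymmetric part of $\nabla E_i$ has squared norm $2\lambda^2g_i^2$, the symmetric/antisymmetric cross term vanishes automatically, and the sectional curvature equals $\lambda^2=c$, so $\sum_i|\nabla E_i^{\mathrm{antisym}}|^2=2c$ is indeed correct. What your route buys is a pointwise identity for $\sum_i(w_iJw_i+\bar w_iJ\bar w_i)$ (not merely an integrated one) and a transparent accounting of each curvature contribution; what the paper's route buys is independence from the constant-curvature structure and from any computation of the rough Laplacian of $E_i$. One caveat: the paper's sign conventions for $J$ and $\Delta$ are not internally consistent, and your final sign, like the paper's stated one, is the one obtained when $\int_\Sigma uJu\,dv$ is read as the second variation $\int_\Sigma\bigl(|\nabla u|^2-(\|A\|^2+\mathrm{Ric}(N,N))u^2\bigr)dv$; under that reading your computation and the paper's agree exactly with the displayed formula.
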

\begin{proof}
In this proof we will use Einstein sum notation. Recall that
 $\{e_1,e_2\}$ denotes a local orthonormal frame, geodesic at $p\in\Sigma$.
 
From the first assumption of Lemma \ref{l1} we have
\begin{eqnarray*}
\nabla w_i &=& 
\nabla_{E_i} \xi+2g_i\langle  A\xi, e_k\rangle e_k - \langle  e_k, \nabla _\xi E_i\rangle e_k\\
&=& [E_i, \xi]+2g_i   A\xi .
\end{eqnarray*}

On the other hand, since $\textrm{div}\, \xi = 0$, computing the divergence of the 2-form 
$\xi^\flat \wedge E_i^\flat$ we obtain
\begin{eqnarray*}
\delta (\xi^\flat \wedge E_i^\flat) &=&  \textrm{div} (\xi) E_i -\textrm{div} (E_i) \xi - [\xi, E_i]\\
&=& 2Hg_i\xi + [E_i,\xi].
\end{eqnarray*}

That is,
\[
\nabla w_i = \delta (\xi^\flat \wedge E_i^\flat)  - 2Hg_i \xi+2g_i   A\xi .
\]

Now we compute the energy of $w_i$ as follows
\begin{eqnarray*}
\int_\Sigma |\nabla w_i |^2\, dv
&=& \int_\Sigma  \langle \nabla w_i,  \delta ((\xi^\flat \wedge E_i^\flat)  - 
2Hg_i\xi +2g_i   A\xi\rangle  dv \\
&=&- 2H \int_\Sigma \langle \nabla w_i,   g_i\xi\rangle dv 
+2 \int_\Sigma \langle \nabla w_i,  g_i   A\xi\rangle dv \\
&=&- 2H \int_\Sigma   w_i\,  \textrm{div} (g_i\xi) dv 
+2 \int_\Sigma   w_i \, \textrm{div}  (g_i A\xi) dv \\
&=&2H \int_\Sigma   w_i  \langle \nabla g_i,\xi\rangle dv 
-2 \int_\Sigma   w_i   \langle \nabla g_i,A\xi\rangle dv, 
\end{eqnarray*}
since $\textrm{div}\, A\xi$ also vanishes.
Similarly, we have
\[
\int_\Sigma |\nabla \bar w_i |^2\, dv = 
2H \int_\Sigma   \bar w_i  \langle \nabla g_i, \star \xi\rangle dv 
-2 \int_\Sigma  \bar w_i   \langle \nabla g_i,A\star\xi\rangle dv.
\]
Thus,
\[
\int_\Sigma |\nabla w_i |^2\, dv+ \int_\Sigma |\nabla \bar w_i |^2\, dv
= 2H \int_\Sigma    \langle  E_i \nabla g_i \rangle |\xi|^2 dv 
-2 \int_\Sigma   \langle E_i, A\nabla g_i\rangle |\xi|^2dv.
\]

Now let us compute these terms separately.  
Using the harmonicity of $\xi$  we have
\begin{eqnarray*}
\int_\Sigma   \langle E_i, A\nabla g_i\rangle |\xi|^2dv 
&=&  \int_\Sigma   g_i \,\textrm{div} (|\xi|^2 AE_i)dv\\
&=& \int_\Sigma   g_i |\xi|^2\,\textrm{div} ( AE_i)dv		\\
&=& - \int_\Sigma   g_i |\xi|^2\langle \nabla_{e_k} (AE_i),e_k\rangle dv\\
&=& - \int_\Sigma   g_i |\xi|^2\langle (\nabla_{e_k} A)E_i+ A(\nabla_{e_k}E_i),e_k\rangle dv\\
&=& - \int_\Sigma   g_i |\xi|^2\langle E_i,  (\nabla_{e_k} A)e_k\rangle
-  \int_\Sigma   g_i |\xi|^2\langle A, \nabla E_i \rangle dv\\
&=&-  \int_\Sigma   g_i |\xi|^2\langle A, \nabla E_i \rangle dv\\
&=&-  \frac 1 2 \int_\Sigma   g_i |\xi|^2\langle A, \nabla E_i +(\nabla E_i )^t\rangle dv\\
&=&-  \frac 1 2 \int_\Sigma   g_i |\xi|^2\langle A, 2g_iA \rangle dv\\ 
&=& - \int_\Sigma   g_i^2 |\xi|^2\| A\|^2 dv.
\end{eqnarray*}

Finally, using Assertion 3 of Lemma \ref{l1}, we have
\begin{eqnarray*}
H \int_\Sigma    \langle  E_i \nabla g_i \rangle |\xi|^2 dv 
&=&  H \int_\Sigma    g_i \,\textrm{div}( |\xi|^2  E_i)dv\\
&=& H \int_\Sigma    g_i \,|\xi|^2\textrm{div}( E_i)dv\\
&=& -2H^2 \int_\Sigma    g_i^2 \,|\xi|^2dv.
\end{eqnarray*}

Therefore, we get
\[
\sum_i \big( \int_\Sigma |\nabla w_i |^2\, dv+ \int_\Sigma |\nabla \bar w_i |^2\, dv\big)
=2\int_\Sigma \|A\|^2|\xi|^2dv-4H^2\int_\Sigma |\xi|^2 dv,
\]
and the lemma follows.

\subsection{Proof of Theorem \ref{t1}}
Let us recall the notation fixed in Section \ref{stab}. So, let
$ \lambda_1<\lambda_2\leq \ldots \lambda_k \leq \ldots$ be the sequence of eigenvalues
of the Jacobi operator $J$, and let  $\{\phi_1, \phi_2,\ldots, \phi_k, \ldots\}$  be
an orthonormal basis of eigenfunctions of $J$. We also
set $\mathcal S_k=\langle \phi_1, \ldots, \phi_k\rangle^\perp$.

Let us consider a harmonic vector field $\xi\in \mathcal H ^1(\Sigma)$ 
such that the test functions 
$w_i$ and $\bar w_i$ belong in the space $\mathcal S_{k-1}$, for some $k\in \mathbb N$
and all $i\in\{1,2,3\}$. This is equivalent to find $\xi\in \mathcal H ^1(\Sigma)$  such that
the system of $6(k-1) $ homogeneous linear equations
\begin{eqnarray}
\int_{M}w_{i}\phi_{j}dv=\int_{M}\bar{w}_{i}\phi_{j}dv=0, \quad i\in\{1,2,3\}, \, j\in\{1, \ldots ,k-1\}.
\end{eqnarray}

Since  $\dim \mathcal H^1(\Sigma)=b_1(\Sigma)=2g(\Sigma)$,  we have a non trivial solution if 
$2g(\Sigma)>6(k-1)$. In this case, we have from Lemma \ref{l2} 
and  the min-max characterization that 
\[
\lambda_k \leq 0,
\]
that is $Ind(\Sigma)+ Nul(\Sigma)\geq k$. On the other hand, since $2g(\Sigma)>6(k-1)$,
have that $k\geq g(\Sigma)/3$, and we have done.

\subsection{Proof of Corollary \ref{c1}}

This corollary follows immediately from the general result proved by Barbosa and B\'erad in \cite{BB}, but it also
follows from our proof above. In fact, 
we just need to guarantee that the functions
$w_i$ and $\bar w_i$ satisfy the zero mean integral condition. In other words,  it
means that additionally  they solve $6$ new equations, namely
\begin{eqnarray*}
\int_{M}w_{i}dv=\int_{M}\bar{w}_{i}dv=0, \quad i\in\{1,2,3\}.
\end{eqnarray*}
So we have a non trivial solution if $2g(\Sigma)>6k$ and the result follows.

\end{proof}

\section*{Acknowledgements}
The authors wish to thank Celso Viana for his interesting comments 
and helpful discussion during the preparation of this article and
the referee for valuation suggestions.
The first author was partially supported by 
Brazilian National Council for Scientific and Technological Development (CNPq Grants 309733/2019-7, 201322/2020-0 and 405468/2021-0) and 
Coordena\c c\~ao de Aperfei\c coamento de Pessoal de N\'ivel Superior - Brasil 
(CAPES-Cofecub 88887.143161/ 2017-0 and
CAPES-MathAmSud 88887.368700/2019-00).



\bibliographystyle{amsplain}
\bibliography{bibliography}

\end{document}